\theoremstyle{plain}
\newtheorem{theorem}{Theorem}
\newtheorem{proposition}{Proposition}
\theoremstyle{definition}
\newtheorem{definition}{Definition}
\theoremstyle{remark}
\newtheorem{example}{Example}
\begin{document}
\title{On transitive and homogeneous binary $G$-spaces}

\author[Pavel Gevorgyan, Quitzeh Morales]{Pavel S. Gevorgyan, Quitzeh Morales Melendez}

\address{Moscow State Pedagogical University, Russia} \email{pgev@yandex.ru}

\address{CONACYT -- Universidad Pedag\'ogica Nacional -- Unidad 201 Oaxaca,
Camino a la Zanjita S/N, Col. Noche Buena, Santa Cruz Xoxocotl\'an, Oaxaca.
C.P. 71230}\email{qmoralesme@conacyt.mx}

\subjclass[2010] {Primary: 54H15; secondary: 57S99.}

\keywords{Homogeneous spaces, transitive actions, binary $G$-actions}

\thanks{The second author was partially supported by Catedras CONACyT Project 1522}

\begin{abstract}
In this paper, the notions of transitivity and homogeneity in binary $G$-spaces are studied. These notions coincide for distributive binary $G$-spaces. For compact $G$, it is shown that distributive transitive binary $G$-spaces are coset spaces with a suitably defined binary $G$-action. Homogeneous binary $G$-spaces are topologically homogeneous and are separated into distinct stabilization types. Examples of each type are constructed.
\end{abstract}

\maketitle


\section*{Introduction}

The study of homogeneous $G$-spaces is the study of $G$-orbit types. This study has allowed a detailed description of the topological structure of such spaces, both in general and in particular important cases, as those of free or proper $G$-actions. In particular, it has made possible the construction of universal $G$-spaces for many classes of $G$-spaces.

Orbits of binary $G$-spaces were studied in \cite{Gevorkyan2021}, where it was pointed out that usual notions for $G$-spaces, such as orbits, do not easily translate to binary $G$-spaces. In binary $G$-spaces, orbits may intersect. However, it was shown in \cite{Gevorkyan2022} that, in the special case of distributive binary $G$-spaces, orbits either coincide or have an empty intersection. Orbits can be either finitely or infinitely generated. In the case of distributive binary $G$-spaces, the orbits are finitely generated.

Transitive and homogeneous binary $G$-spaces are introduced. For distributive binary $G$-spaces these concepts coincide. A classification result is given in the case of distributive transitive binary $G$-spaces for compact $G$. These are coset spaces by closed normal subgroups with a binary action of $G$ by left multiplication twisted by group conjugation. As a consequence, a classification of free transitive distributive binary G-spaces for compact $G$ is given.

Homogeneous binary $G$-spaces are topologically homogeneous spaces. The stabilization properties of homogeneous binary $G$-spaces are important for the study of these spaces. A homogeneous binary $G$-space may have different stabilization properties at different points. The stabilization properties separate the class of homogeneous binary G-spaces by types taking values in the natural numbers or at $\infty$. Transitive binary $G$-spaces for compact $G$ are examples of the first type. In the paper \cite{Gevorkyan2021}, an example of an infinitely generated binary $G$-space is constructed. Similar constructions give examples of discrete binary $G$-spaces of some types. An application of hyperspherical coordinates on real Euclidean $n$-dimensional spaces gives examples of homogeneous binary $G$-spaces of each finite type for non-discrete, abelian topological group $G$. 


\section{Preliminaries}
In the following, $G$ and $H$ denote topological groups, and $X$ and $Y$ denote topological spaces. All maps are assumed to be continuous.

Recall that an \textit{action} of the group $G$ on the space $X$ is a map
$$\alpha: G\times X \longrightarrow X$$
such that 
$$\alpha(gh,x)=\alpha(g,\alpha(h,x)),$$
$$\alpha(e,x)=x$$
for any $g,h\in G$ and $x\in X$, where $e$ is the identity of $G$. The space $X$, together with a given action $\alpha$ of $G$, is called a $G$-space.

It is customary to omit the map $\alpha$ from the notation and use the notation $gx$ for $\alpha(g,x)$, so that the above identities become $g(hx)= (gh)x$ and $ex = x$.

A map $\varphi:X\longrightarrow Y$ between $G$-spaces $(G,X,\alpha)$ and $(G,Y,\beta)$ is called \textit{equivariant} if $\varphi (\alpha(g,x))=\beta(g,\varphi (x))$ or, for simplicity, $\varphi (gx)=g\varphi (x)$ for any $g\in G, x\in X$.

If $X$ is a G-space and $x \in X$, then the subspace $Gx=\{gx; \ g\in G, \ x\in X\}\subset X$ is called the \textit{orbit} or \textit{$G$-orbit} of $x\in X$. 

A $G$-space $X$ is said to be \textit{transitive} if $Gx=X$ for any $x\in X$. It is well known that if $G$ is a compact group, then any transitive $G$-space is equivariantly homeomorphic to some coset space $G|H$ of a topological group $G$ by a closed subgroup $H$ together with the action of $G$ by left translation: $g(g'H) = (gg')H$ for any $g,g'\in G$.

A map $\mu:G\times X^2 \longrightarrow X$ is called a \textit{binary action} of the topological group $G$ on the space $X$ if the identities
\[
    \mu(gh,x,y)=\mu(g,x,\mu(h,x,y)),
\]
\[
    \mu(e,x,y)=y
\]
are satisfied for any $g,h\in G$ and $x,y\in X$.  In this case the triple
$(G,X,\mu)$ is called a \textit{binary $G$-space}.

By analogy with the case of usual $G$-action we denote $\mu(g,x,y)$ simply by $g(x,y)$. With these notation the identities for a binary action take the form
\[
    gh(x,y)=g(x,h(x,y)),
\]
\[
e(x,y)=y.
\]

A map $\varphi :X\to Y$ between binary $G$-spaces $(G,X,\mu)$ and $(G,Y,\nu)$ is said to be \textit{biequivariant} if the following diagram is commutative:
$$
\xymatrix{ 
G \times X\times X \ar[r]^{1\times \varphi\times \varphi}\ar[d]_\mu &G \times Y\times Y\ar[d]^\nu \\
 X\ar[r]^{\varphi} & Y
 } 
$$
or, equivalently, if $\varphi(g(x,x')) = g(\varphi(x),\varphi(x'))$ for any $g\in G$ and $x,x'\in X$.

The map $\varphi : X\to Y$ is called a \textit{biequimorphism}, if it is a biequivariant homeomorphism with biequivariant inverse.

For a subset  $K\subset G$ of the group $G$ and a subset $A\subset X$ of the binary $G$-space $X$ denote
$$
K(A,A)= \{g(a_1,a_2); \ g\in K, \ a_1,a_2\in A\}.
$$ Considering this, denote $g(A,A)=\{g\}(A,A)$ and $G(x,y)=G(\{x\},\{y\})$.

A subset $A\subset X$ of the binary $G$-space $X$ is said to be \textit{$G$-bi-invariant} or just \textit{bi-invariant} if $G(A,A)= A$.

If $X$ is a binary $G$ space and $x\in X$, then the minimal bi-invariant subset of $X$ containing $x$ is called the \textit{orbit} of the point $x$, which we denote by $[x]$.

The set $G_{(x,x)}= \{g\in G; \ g(x,x)=x\}$ is a subgroup of the group $G$ which we call the \textit{stationary subgroup} or \textit{isotropy subgroup} of the point $x\in X$. 

A binary $G$-space is called \textit{distributive} if the equation
\begin{equation}
    g(h(x,x'),h(x,x''))=h(x,g(x',x''))
\end{equation}
is true for any points $x,x',x''\in X$ and any elements $g,h\in G$.

In \cite{Gevorkyan2021} it is shown that for distributive $G$-spaces one has $[x]=G(x,x)$.

These definitions, as well as all other definitions, notions, and results used in the paper without reference, can be found in \cite{Bredon}--\cite{movsisyan}.


\section{Transitive binary \texorpdfstring{$G$}{G}-spaces}

Consider a topological group $G$ and a topological space $X$.

\begin{definition}
A binary action of the group $G$ on the space $X$ is called \textit{transitive}, if the condition $G(x,x)=X$ is true for any $x\in X$, i.e., if there is precisely one orbit, $X$ itself.  In this case, $X$ is called a \textit{transitive}
binary $G$-space.
\end{definition}

\begin{definition}
A binary action of the group $G$ on 
the space $X$ is called \textit{free}, if for any $x\in X$ the stationary group $G_{(x,x)}$ is trivial. In this case, the space $X$ is called a \textit{free} binary $G$-space.
\end{definition}

\begin{proposition}\label{prop_0}
Let $G$ be a compact group and let $H\subset G$ be a closed normal subgroup. Then a coset space $G|H$ together with the binary action
$\mu : G\times G|H\times G|H \to G|H$ of $G$, defined by the formula
\begin{equation}\label{eq-action}
\mu (g,g_1H,g_2H) = g_1gg_1^{-1}g_2H, \quad \text{or} \quad g(g_1H,g_2H) = g_1gg_1^{-1}g_2H
\end{equation}
for any elements $g,g_1,g_2\in G$, is a transitive binary $G$-space.
\end{proposition}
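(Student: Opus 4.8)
The plan is to check, in turn, the four things the statement demands: that the right-hand side of \eqref{eq-action} does not depend on the chosen coset representatives $g_1,g_2$; that the resulting map $\mu$ is continuous; that $\mu$ satisfies the two identities in the definition of a binary action; and that $G(g_1H,g_1H)=G|H$ for every coset $g_1H$.

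I would begin with well-definedness, which is the one place where normality of $H$ is genuinely used. Replacing $g_1$ by $g_1h_1$ and $g_2$ by $g_2h_2$ with $h_1,h_2\in H$, the value $g_1h_1\,g\,h_1^{-1}g_1^{-1}g_2h_2H$ first simplifies to $g_1h_1\,g\,h_1^{-1}g_1^{-1}g_2H$ since $h_2\in H$. Writing $h_1gh_1^{-1}=g\,(g^{-1}h_1g\,h_1^{-1})=gh_3$ with $h_3\in H$ by normality, this equals $g_1gh_3g_1^{-1}g_2H=g_1gg_1^{-1}g_2\,\bigl((g_1^{-1}g_2)^{-1}h_3(g_1^{-1}g_2)\bigr)H$, and the conjugate of $h_3$ lies in $H$ again by normality; so the value is unchanged and $\mu$ is well-defined. (Compactness of $G$ plays no role here; it is needed only for the converse classification in the subsequent results.)

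Continuity is then automatic: $(g,g_1,g_2)\mapsto g_1gg_1^{-1}g_2$ is continuous as a composition of multiplications and inversions in $G$, and composing with the quotient map $q\colon G\to G|H$ factors through $\mathrm{id}_G\times q\times q$, which is a quotient map because $q$ is open. The identity $e(g_1H,g_2H)=g_1eg_1^{-1}g_2H=g_2H$ is immediate. For the composition identity, $gh(g_1H,g_2H)=g_1(gh)g_1^{-1}g_2H=g_1ghg_1^{-1}g_2H$, while $g\bigl(g_1H,h(g_1H,g_2H)\bigr)=g\bigl(g_1H,(g_1hg_1^{-1}g_2)H\bigr)=g_1gg_1^{-1}(g_1hg_1^{-1}g_2)H=g_1ghg_1^{-1}g_2H$, and the two coincide.

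Finally, for transitivity, fix cosets $g_1H$ and $g_2H$ and take $g=g_1^{-1}g_2\in G$; then $g(g_1H,g_1H)=g_1(g_1^{-1}g_2)g_1^{-1}g_1H=g_2H$, so indeed $G(g_1H,g_1H)=G|H$ for every $g_1H$, which is the transitivity condition. The only step requiring any care is the well-definedness computation, and there the essential use of $H\trianglelefteq G$ is concentrated; everything else is routine coset bookkeeping. One may also note that $G|H$ is Hausdorff since $H$ is closed, so the codomain is a space of the expected type.
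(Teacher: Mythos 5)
Your proof is correct and follows essentially the same route as the paper's: a direct verification of well-definedness, the binary action identities, and transitivity for the formula \eqref{eq-action}. You merely spell out the steps the paper dismisses as ``easy'' or ``evident'' (well-definedness via normality, continuity via the open quotient map, and the explicit choice $g=g_1^{-1}g_2$ for transitivity), all of which check out.
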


\begin{proof}
Since $H$ is a normal subgroup of $G$, it is easy to prove that $\mu$ is a well-defined map.

Note that $\mu$ is indeed a binary action of group $G$ on $G|H$: one has
$e(g_1H,g_2H)=g_2H$ 
and
\begin{multline*}
gg'(g_1H,g_2H) = g_1gg'g_1^{-1}g_2H = g_1gg_1^{-1}g_1g'g_1^{-1}g_2H = \\
=g(g_1H, g_1g'g_1^{-1}g_2H) = g(g_1H, g'(g_1H,g_2H)).
\end{multline*}

It is evident that the binary action $\mu$ is transitive. 
\end{proof}

\begin{proposition}
Let $G$ be a compact group and let $H$ and $K$ be closed normal subgroups of $G$. Then there exists a biequivariant map $G|H \to G|K$ between transitive binary $G$-spaces $G|H$ and $G|K$ iff $H$ is a subgroup of $K$.
\end{proposition}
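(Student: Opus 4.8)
The plan is to prove the two implications separately; both are short, the substance being to force the shape of any biequivariant map out of the action formula~\eqref{eq-action}.

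For the sufficiency, assume $H\subseteq K$. Then the usual projection $\varphi\colon G|H\to G|K$, $\varphi(gH)=gK$, is a well-defined continuous surjection, and I would verify biequivariance by a one-line computation: for $g,g_1,g_2\in G$,
\[
\varphi\bigl(g(g_1H,g_2H)\bigr)=\varphi\bigl(g_1gg_1^{-1}g_2H\bigr)=g_1gg_1^{-1}g_2K=g\bigl(g_1K,g_2K\bigr)=g\bigl(\varphi(g_1H),\varphi(g_2H)\bigr),
\]
using \eqref{eq-action} on each side. This settles the ``if'' direction.

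For the necessity, suppose $\varphi\colon G|H\to G|K$ is biequivariant and put $\varphi(eH)=g_0K$ for some $g_0\in G$. The key step is to specialize biequivariance to $g_1=g_2=e$: from \eqref{eq-action} one has $g(eH,eH)=gH$, hence
\[
\varphi(gH)=\varphi\bigl(g(eH,eH)\bigr)=g\bigl(\varphi(eH),\varphi(eH)\bigr)=g\bigl(g_0K,g_0K\bigr)=g_0gg_0^{-1}g_0K=g_0gK
\]
for every $g\in G$, so $\varphi$ is necessarily the map $gH\mapsto g_0gK$. Applying this with $g=h\in H$ and using $hH=eH$ gives $g_0hK=\varphi(hH)=\varphi(eH)=g_0K$, whence $hK=K$, i.e.\ $h\in K$; thus $H\subseteq K$.

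I do not expect a genuine obstacle here. The only point needing a little attention is the middle step of the necessity argument, where one must check that biequivariance really pins down $\varphi$ on all of $G|H$ (not merely on one computation), after which the well-definedness of $gH\mapsto g_0gK$ forces the inclusion immediately. It is worth remarking that compactness of $G$ plays no role in either implication — it is used only, via Proposition~\ref{prop_0}, to guarantee that $G|H$ and $G|K$ actually carry the transitive binary $G$-actions in question.
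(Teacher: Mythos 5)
Your proposal is correct and follows essentially the same route as the paper: the same quotient map $gH\mapsto gK$ for sufficiency, and for necessity the same evaluation of biequivariance at the identity coset, $\varphi(hH)=h(\varphi(eH),\varphi(eH))=g_0hK$ versus $\varphi(hH)=\varphi(eH)=g_0K$, forcing $h\in K$. The only cosmetic difference is that you first pin down $\varphi$ on all of $G|H$ before specializing to $h\in H$, whereas the paper specializes immediately; your closing remark that compactness is irrelevant to this proposition is also accurate.
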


\begin{proof}
Let $H$ be a subgroup of $K$. Define a map $f:G|H \to G|K$ by the formula $f(gH)=gK$. Note that $f$ is well defined, i.e., if $gH=g'H$ then $f(gH)=f(g'H)$. Indeed, it follows from $gH=g'H$ that $g^{-1}g'\in H\subset K$,  $g^{-1}g'K=K$, $g'K=gK$, and hence $f(gH)=f(g'H)$.

The map $f$ is biequivariant:
\begin{multline*}
    f(g(g_1H,g_2H))=f(g_1gg_1^{-1}g_2H) = g_1gg_1^{-1}g_2K = \\ =g(g_1K,g_2K)= g(f(g_1H),f(g_2H)).
\end{multline*}

Conversely, let $f : G|H \to G|K$ be any  biequivariant map and suppose that $f(H)= a K$ for some $a\in G$. It follows from the biequivariance of the map $f$ that for any $h\in H$ one has $f(h(H,H))= h(f(H), f(H)) = h(aK, aK) = aha^{-1}aK = ahK$. On the other hand, $f(h(H,H))=f(hH)=f(H)=aK$. So, $ahK=aK$, $hK=K$. Therefore, $h\in K$ and hence $H\subset K$. 
\end{proof}

\begin{theorem}\label{th_1}
Let $G$ be a compact group. Then any transitive distributive binary $G$-space $X$ is biequimorphic to the binary $G$-space $(G,G|H,\mu)$, where  $H$ is some normal subgroup of $G$.
\end{theorem}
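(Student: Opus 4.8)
The plan is to fix a point $x_0 \in X$, let $H = G_{(x_0,x_0)}$ be its isotropy subgroup, and build a biequimorphism $\varphi : G|H \to X$ by $\varphi(gH) = g(x_0,x_0)$. First I would check this is well defined: the orbit map $\pi_{x_0} : G \to X$, $g \mapsto g(x_0,x_0)$, is continuous and, by transitivity, surjective; since $G$ is compact and $X$ is (implicitly) Hausdorff, $\pi_{x_0}$ is a closed map, hence a quotient map, and its fibers are exactly the left cosets of $H$ because $g(x_0,x_0) = g'(x_0,x_0)$ iff $g^{-1}g' \in G_{(x_0,x_0)}$ — here I would use the binary-action identities $e(x_0,x_0)=x_0$ and $gh(x_0,x_0)=g(x_0,h(x_0,x_0))$ together with distributivity to manipulate the relevant expressions (this is the routine group-action bookkeeping, analogous to the classical $G$-space case). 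The induced map $\varphi : G|H \to X$ is then a continuous bijection from a compact space to a Hausdorff space, hence a homeomorphism.

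Next I would verify that $H$ is normal in $G$, which is the crucial structural point forced by distributivity (recall the statement only claims $H$ is \emph{some} normal subgroup). The idea: using $[x_0] = G(x_0,x_0) = X$ and the distributive law, one shows that the isotropy subgroups at all points of $X$ coincide with $H$, and that conjugates of $H$ again fix $x_0$. Concretely, take $h \in H$ and $g \in G$; I want $ghg^{-1} \in H$, i.e. $ghg^{-1}(x_0,x_0) = x_0$. Write $x_0 = g(x_0,x_0)$-type expressions using surjectivity and push $h$ through using the distributivity identity $g(h(x,x'),h(x,x'')) = h(x,g(x',x''))$ with a judicious choice of $x,x',x''$; the upshot should be that the action of $H$ is "invisible" from the second slot in a way that is conjugation-invariant. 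This normality verification is the step I expect to be the main obstacle, since it is exactly where the distributive hypothesis must be used in an essential and slightly delicate way, and getting the right substitution into the distributive law is the heart of the matter.

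Finally I would transport the binary action along $\varphi$ and identify it with the action $\mu$ of Proposition~\ref{prop_0}. That is, I compute $\varphi^{-1}\big(g(\varphi(g_1H),\varphi(g_2H))\big) = \varphi^{-1}\big(g(g_1(x_0,x_0), g_2(x_0,x_0))\big)$ and show it equals $g_1 g g_1^{-1} g_2 H$. This again uses the binary-action axioms and distributivity: $g_1(x_0,x_0) = g_1(x_0, e(x_0,x_0))$ lets one rewrite $g(g_1(x_0,x_0), g_2(x_0,x_0))$ by first absorbing $g_1$ via distributivity applied with $h = g_1$, reducing to an expression of the form $g_1\big(g g_1^{-1} g_2(x_0,x_0)\big)$-type, i.e. $g_1 g g_1^{-1} g_2(x_0,x_0) = \varphi(g_1 g g_1^{-1} g_2 H)$. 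Since $\varphi$ is a homeomorphism and we have checked it intertwines the actions, $\varphi$ is a biequimorphism (its inverse is automatically biequivariant once $\varphi$ is a biequivariant homeomorphism), completing the proof. I would present the computations with the distributive law explicitly displayed but not belabor the coset-bookkeeping, which is standard.
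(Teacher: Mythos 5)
Your proposal follows essentially the same route as the paper: fix $x_0$, take $H=G_{(x_0,x_0)}$, prove normality of $H$ from distributivity, show $\varphi(gH)=g(x_0,x_0)$ is a bijective continuous map and hence a homeomorphism by compactness, and verify biequivariance with the action of Proposition~\ref{prop_0} by the same distributivity manipulation you sketch. The normality step you flag as the main obstacle does go through exactly along the lines you indicate, and is the paper's computation: $ghg^{-1}(x_0,x_0)=gh\bigl(x_0,g^{-1}(x_0,x_0)\bigr)=g^{-1}\bigl(gh(x_0,x_0),gh(x_0,x_0)\bigr)=g^{-1}\bigl(g(x_0,x_0),g(x_0,x_0)\bigr)=g\bigl(x_0,g^{-1}(x_0,x_0)\bigr)=x_0$, using the distributive law twice together with $h(x_0,x_0)=x_0$.
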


\begin{proof}
Choose any point $x\in X$ and consider the isotropy subgroup $H=G_{(x,x)}$ of the point $x$. For any $h\in H$ and $g\in G$ it follows from the distributivity of the binary action that \begin{multline*}
   ghg^{-1}(x,x)= gh(x,g^{-1}(x,x)) = g^{-1}(gh(x,x), gh(x,x))= \\
   =g^{-1}(g(x,h(x,x)), g(x,h(x,x)))= g^{-1}(g(x,x), g(x,x))= \\
   =g(x, g^{-1}(x,x)) = e(x,x) = x.
\end{multline*} 
So, $gHg^{-1}=H$, i.e., $H$ is a normal subgroup of $G$. 

Now define
$$\varphi:G|H \to X$$
by the formula
\[
\varphi(gH) = \varphi(gG_{(x,x)}) = g(x,x),
\]
where $g\in G$, $x\in X$. 

Note that the map $\varphi$
is bijective. It is also continuous by the definition of the coset space $G|H$ and the continuity of the map $g\mapsto g(x,x)$.
As the space  $G|G_{(x,x)}$ is compact, the map $\varphi$ is a homeomorphism.

It remains to show that $\varphi$ is a biequivariant map, i.e. the equation
\[
\varphi(g(g_1H, g_2H)) = g(\varphi(g_1H), \varphi(g_2H))
\]
holds.

Using the distributivity of the binary action of $G$ on $X$, we obtain
\begin{multline*}
    \varphi(g(g_1H), g_2H)) = \varphi(g_1gg_1^{-1}g_2H) = g_1gg_1^{-1}g_2(x,x) =  \\
    = g_1(x, gg_1^{-1}g_2(x,x)) = g_1(x,g(x,g_1^{-1}g_2(x,x))) = g(g_1(x,x),g_1(x,g_1^{-1}g_2(x,x)))= \\ g(g_1(x,x),g_2(x,x)) = g(\varphi(g_1H), \varphi(g_2H)).
\end{multline*}
for every $g,g_1,g_2\in G$, $x\in X$. So, $\varphi$ is a biequimorphism. 
\end{proof}

The next theorem follows from Theorem \ref{th_1} and Proposition \ref{prop_0}.

\begin{theorem}\label{cor-free-transitive}
Let $G$ be a compact group. 
Then any free transitive distributive binary $G$-space is biequimorphic to the binary $G$-space
$(G,G,\eta)$ with the binary action
\begin{equation}\label{standard_distributive}
  \eta(g,g_1,g_2)=g_1gg_1^{-1}g_2,
\end{equation}
where $g,g_1,g_2\in G$. 
\end{theorem}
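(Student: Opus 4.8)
The plan is to combine the two results cited: Theorem~\ref{th_1} reduces an arbitrary free transitive distributive binary $G$-space to a coset space $G|H$ with the twisted action $\mu$, and then the freeness hypothesis should force $H$ to be trivial, so that $G|H = G$ and $\mu$ becomes exactly $\eta$.

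First I would invoke Theorem~\ref{th_1}: since $X$ is a transitive distributive binary $G$-space with $G$ compact, there is a normal subgroup $H = G_{(x,x)}$ of $G$ and a biequimorphism $\varphi : (G,G|H,\mu) \to (G,X,\mu_X)$. Next I would use that $X$ is free: by definition this means $G_{(y,y)}$ is trivial for \emph{every} $y \in X$, and in particular $H = G_{(x,x)} = \{e\}$. (One could equally well argue on the $G|H$ side: the isotropy subgroup of the point $eH$ in $(G,G|H,\mu)$ is exactly $H$ by the formula $g(eH,eH) = eHg e^{-1} e H = gH$, wait — more carefully, $\mu(g, eH, eH) = e\cdot g\cdot e^{-1}\cdot e\, H = gH$, which equals $eH$ iff $g \in H$; so freeness of the biequimorphic space $X$ again gives $H = \{e\}$.) With $H$ trivial, $G|H$ is canonically identified with $G$ via $gH \mapsto g$, and under this identification the action $\mu(g,g_1H,g_2H) = g_1 g g_1^{-1} g_2 H$ becomes precisely $\eta(g,g_1,g_2) = g_1 g g_1^{-1} g_2$ as in \eqref{standard_distributive}.

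The only genuinely substantive point to check is that this identification $G|\{e\} \cong G$ is itself a biequimorphism between $(G, G|\{e\}, \mu)$ and $(G,G,\eta)$ — but this is immediate since the quotient map $G \to G|\{e\}$ is a homeomorphism of topological groups and it visibly intertwines $\mu$ with $\eta$ by the displayed formulas. Composing $\varphi$ with this identification yields the desired biequimorphism $(G,G,\eta) \to (G,X,\mu_X)$, which proves the theorem.

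I do not expect any real obstacle here; the statement is essentially a corollary packaging Theorem~\ref{th_1} together with Proposition~\ref{prop_0} (which guarantees $(G,G,\eta)$ is a legitimate transitive binary $G$-space in the first place, being the $H=\{e\}$ instance of $(G,G|H,\mu)$). The one place to be slightly careful is the logical direction of the freeness argument: freeness is a condition on \emph{every} point, and one must note that biequimorphisms carry isotropy subgroups to isotropy subgroups, so it is legitimate to read off the triviality of $H$ either from the abstract space $X$ or from the model $G|H$. If a fully self-contained argument is wanted, I would spell out that last observation explicitly; otherwise the proof is a two-line deduction.
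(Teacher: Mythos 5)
Your proposal is correct and matches the paper's intent exactly: the paper gives no separate argument beyond stating that the theorem follows from Theorem~\ref{th_1} and Proposition~\ref{prop_0}, which is precisely your packaging (freeness forces $H=G_{(x,x)}=\{e\}$, so the model $G|H$ of Theorem~\ref{th_1} becomes $(G,G,\eta)$). Your extra remark that biequimorphisms preserve isotropy subgroups is a harmless and sound way to make the deduction fully explicit.
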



\section{Homogeneous binary \texorpdfstring{$G$}{G}-spaces}

\begin{definition}\label{def_homogeneous}
Let $X$ be a binary $G$-space. If there is a point $x\in X$ such that $[x]=X$, then the space $X$ is called a \textit{homogeneous} binary $G$-space. In this case $x$ is called a \textit{stabilization point} of $X$.
\end{definition}

The following example shows that if $X$ is a homogeneous binary $G$-space, then it is not necessarily true that any point $x\in X$ is a stabilization point of $X$.
 
\begin{example}\label{ex-ZZZ}
The binary $G$-space $(G, X, \mu)$ where $G= \mathbb{Z}$, $X= \mathbb{Z}$ and the binary action 
$\mu: G\times X\times X \to X$ is defined by the rule
\begin{equation}
 \mu(n,x,x')= n(x,x')=nx+x'.
\end{equation} is homogeneous. Note that $x=1$ is a stabilization point: $[1]=G(1,1)=X$. However, $x=0$ is not a stabilization point because    
$[0]=G(0,0)=\{0\}\neq X$.  
\end{example}

Notice that transitive binary $G$-spaces are homogeneous. The converse is not true. However, for distributive binary $G$-spaces one has the following.

\begin{proposition}\label{prop_1}
Any homogeneous distributive binary $G$-space is transitive.
\end{proposition}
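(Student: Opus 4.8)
The plan is to reduce transitivity to information at the single stabilization point, using the identity $[x]=G(x,x)$ valid for distributive binary $G$-spaces (recalled in the Preliminaries). By Definition \ref{def_homogeneous} there is a point $x_0\in X$ with $[x_0]=X$, hence $G(x_0,x_0)=X$. It therefore suffices to show $G(x,x)=X$ for every $x\in X$.

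First I would fix an arbitrary $x\in X$. Since $G(x_0,x_0)=X$, we may write $x=g(x_0,x_0)$ for some $g\in G$. The key step is then to rewrite $G(x,x)$ by means of distributivity: for every $h\in G$,
\[
 h(x,x)=h\bigl(g(x_0,x_0),\,g(x_0,x_0)\bigr)=g\bigl(x_0,\,h(x_0,x_0)\bigr),
\]
where the second equality is the instance of the distributivity identity obtained by exchanging the roles of $g$ and $h$ and taking all three point-arguments equal to $x_0$. Letting $h$ range over $G$ and using $\{h(x_0,x_0):h\in G\}=G(x_0,x_0)=X$, this yields $G(x,x)=\{\,g(x_0,y):y\in X\,\}$.

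Finally I would observe that for the fixed element $g$ the map $y\mapsto g(x_0,y)$ is a bijection of $X$ onto itself: from the binary-action axioms, $g^{-1}\bigl(x_0,g(x_0,y)\bigr)=g^{-1}g(x_0,y)=e(x_0,y)=y$, and symmetrically $g\bigl(x_0,g^{-1}(x_0,y)\bigr)=y$, so $y\mapsto g^{-1}(x_0,y)$ is a two-sided inverse. Hence $\{\,g(x_0,y):y\in X\,\}=X$, i.e.\ $G(x,x)=X$. Since $x$ was arbitrary, $X$ is transitive.

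I do not expect a serious obstacle here; the only point requiring care is matching the variables correctly in the distributivity identity — deciding which common argument plays the role of "$x$" and which group element is the outer one. An alternative, even shorter route would invoke the result of \cite{Gevorkyan2022} that in a distributive binary $G$-space two orbits either coincide or are disjoint: since $x\in[x]\cap[x_0]$ for every $x$, this immediately forces $[x]=[x_0]=X$. I would mention this as a remark but prefer the self-contained computation above.
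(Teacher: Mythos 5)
Your proof is correct, but it takes a different route from the paper's. The paper's argument is essentially one line: it uses $[x]=G(x,x)$ for distributive actions and then invokes the external result of \cite{Gevorkyan2022} (that in a distributive binary $G$-space the orbit of any point coincides with the orbit of the stabilization point, orbits being pairwise disjoint or equal), concluding $G(x,x)=[x]=[x_0]=X$ directly --- this is exactly the ``alternative, even shorter route'' you mention in your closing remark. Your main argument instead proves the needed equality $G(x,x)=X$ from scratch: you write $x=g(x_0,x_0)$, apply the distributivity identity with all point-arguments equal to $x_0$ (and the roles of $g$, $h$ exchanged) to get $h(x,x)=g\bigl(x_0,h(x_0,x_0)\bigr)$, and then use the binary-action axioms to show that $y\mapsto g(x_0,y)$ is a bijection of $X$ (in fact surjectivity alone suffices), so $G(x,x)=\{g(x_0,y):y\in X\}=X$. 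Both steps check out against the paper's definitions. What your approach buys is self-containedness: it does not lean on the cited orbit-dichotomy result, and in effect it reproves the special case of that result needed here; what the paper's approach buys is brevity and an explicit link to the established orbit theory of distributive binary $G$-spaces. Either version is acceptable; if you keep yours, it would be worth stating explicitly that surjectivity of $y\mapsto g(x_0,y)$ is the only property actually used.
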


\begin{proof} 
Consider a homogeneous distributive binary $G$-space $X$. By Definition \ref{def_homogeneous} there is a point $x_0\in X$ such that $[x_0]=X$. As the binary $G$-space $X$ is distributive, one has that $[x_0]=G(x_0,x_0)$, and for any point $x\in X$, $G(x,x)=[x]=[x_0]=G(x_0,x_0)=X$ by \cite[Prop. 2]{Gevorkyan2022}. Hence $X$ is a transitive binary $G$-space.

\end{proof}

\begin{theorem}
Homogeneous binary $G$-spaces are topologically homogeneous spaces. 
\end{theorem}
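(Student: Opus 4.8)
The plan is to show that for any two points $x_0, x_1$ in a homogeneous binary $G$-space $X$ there is a homeomorphism of $X$ onto itself carrying $x_0$ to $x_1$. The natural candidates for such homeomorphisms are the "partial translations'' $\tau_g \colon X \to X$ given by $\tau_g(y) = g(x,y)$ for a fixed $x \in X$ and $g \in G$; the binary action axioms show $\tau_{gh} = \tau_g \circ \tau_h$ with the same base point $x$, and $\tau_e = \mathrm{id}$, so each $\tau_g$ is a homeomorphism of $X$ with inverse $\tau_{g^{-1}}$ (all continuous, since $\mu$ is). So the first step is to record that $\{\tau_g \mid g \in G\}$, for a fixed base point $x$, is a group of self-homeomorphisms of $X$ acting in the ordinary (unary) sense.

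Next I would use the stabilization hypothesis. Fix a stabilization point $x \in X$, so $[x] = X$. Since the binary action is not assumed distributive, I cannot invoke $[x] = G(x,x)$; instead I need the description of $[x]$ as the minimal bi-invariant set containing $x$, which is built up by iterating the operations $(a,b) \mapsto g(a,b)$ starting from $\{x\}$. The key observation is that every element of $[x]$ obtained this way lies in the $\tau$-orbit of $x$ based at $x$: indeed $x = \tau_e(x)$, and if $y \in \{\tau_g(x) : g \in G\}$ then $g'(x,y) = \tau_{g'}(y)$ is again in that set, while $g'(y,x)$ and more general expressions $g'(a,b)$ with $a,b$ already reached need a short induction. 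The cleanest formulation: let $\Omega = \{\tau_g(x) : g \in G\}$; one checks $\Omega$ is bi-invariant — for $a = \tau_g(x)$, $b = \tau_h(x)$ in $\Omega$ and $g' \in G$, we have... hmm, $g'(a,b)$ is not obviously of the form $\tau_{g''}(x)$ unless we know more. The honest route is to show the set $\Omega' = \{$ points reachable from $x\} = [x]$ consists of points each of which is the image of $x$ under some finite composite of partial translations $\tau_{g}^{(z)}\colon y \mapsto g(z,y)$ with various base points $z$ already in $[x]$; every such composite is a homeomorphism of $X$, so every point of $[x] = X$ is $F(x)$ for some homeomorphism $F$ of $X$ built from partial translations.

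Finally, given $x_0 = x_1 = $ arbitrary points of $X$: since $X = [x]$, write $x_0 = F_0(x)$ and $x_1 = F_1(x)$ with $F_0, F_1$ homeomorphisms of $X$ as above; then $F_1 \circ F_0^{-1}$ is a homeomorphism of $X$ taking $x_0$ to $x_1$. This establishes topological homogeneity. I expect the main obstacle to be the bookkeeping in the middle step: making precise that the minimal bi-invariant set $[x]$ is exactly the set of values at $x$ of finite composites of partial translations based at previously-constructed points, and that each such composite is a self-homeomorphism — one must be careful that the base points themselves vary, so the relevant semigroup of maps is generated by $\{y \mapsto g(z,y) : g \in G,\ z \in [x]\}$, and one should verify this collection is closed under composition and inverses well enough to conclude each generator, hence each composite, is invertible (using $g^{-1}$ for the inverse of $y \mapsto g(z,y)$, which holds because $gg^{-1}(z,y) = g(z, g^{-1}(z,y))$ forces $y \mapsto g^{-1}(z,y)$ to be a two-sided inverse). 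Once that is in hand the conclusion is immediate.
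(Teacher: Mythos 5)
Your proposal is correct and follows essentially the same route as the paper: the paper also builds the homeomorphisms from the partial translations $y\mapsto g(z,y)$ (invertible via $g^{-1}$), and makes your ``bookkeeping'' step precise by the filtration $G^{n}(x_0)$ with $[x_0]=\bigcup_n G^n(x_0)$ (cited from \cite{Gevorkyan2021}), inducting on $n$ to write each point as the image of $x_0$ under a composite of such translations with varying base points. No substantive difference in approach.
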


\begin{proof}
Let $X$ be a homogeneous binary $G$-space. By Definition \ref{def_homogeneous} there exists a point $x_0\in X$ such that
$[x_0]=X$. 

In order to prove that the space $X$ is topologically homogeneous, it is enough to show that  for any point $x^*\in X$ there exists a homeomorphism $\varphi:X \to X$ such that $\varphi(x_0)=x^*$.

Consider the following sequence of subsets of the binary $G$-space $X$:
\begin{equation}\label{eq-G^n}
G^1(x_0)=G(x_0,x_0), \, \ldots\,, \  G^{n}(x_0)=G(G^{n-1}(x_0), G^{n-1}(x_0)), \, \ldots
\end{equation}
where $n=1, 2, \ldots $ 

It follows from \cite[Proposition 7]{Gevorkyan2021} that  
$$[x_0]=\bigcup_{n=1}^\infty G^n(x_0) $$
and hence, by assumption,
\begin{equation*}
X=\bigcup_{n=1}^\infty G^n(x_0).
\end{equation*}
Therefore, for any point $x\in X$ there is a natural number $n$ such that $x\in G^n(x_0)$.

Suppose that $x^*\in G^1(x_0)=G(x_0,x_0)$, i.e., there exists an element $g_0\in G$ such that $g_0(x_0,x_0)=x^*$. Then the map $\varphi:X \to X$ defined by
\begin{equation*}
    \varphi (x)=g_0(x_0,x)
\end{equation*} 
is a homeomorphism and translates $x_0$ to $x^*$: $\varphi(x_0)=g_0(x_0,x_0)=x^*$.

By induction, assume that for any point  $x\in G^{n}(x_0)$ one can find a homeomorphism 
$\varphi:X \to X$ such that $\varphi(x_0)=x$, and consider any point $x^*\in G^{n+1}(x_0)$. Since $G^{n+1}(x_0)=G(G^n(x_0),G^n(x_0))$, there are  $x', x''\in G^n(x_0)$ and $g'\in G$ such that 
$$x^*=g'(x', x'').$$
Because $x'' \in G^{n}(x_0)$, there is a  homeomorphism $\varphi': X\to X$ such that 
$$\varphi'(x_0)=x''.$$

Consider the map $\varphi'':X\to X$ defined by
\begin{equation*}
   \varphi'' (x)=g'(x',x),
\end{equation*}
for any $x\in X$, which is a homeomorphism.

Then, for the homeomorphism
$\varphi=\varphi''\circ \varphi':X\to X$
one has
$$
    \varphi(x_0)=\varphi''(\varphi'(x_0))=\varphi''(x'')=g'(x',x'')=x^*,
$$
which is what needed to be proven. 
\end{proof}

\begin{definition}\label{steps}
A homogeneous binary $G$-space $X$ is said to have \textit{the stabilization property at the $n$-th step at point} $x$, or alternatively, we say that a binary $G$-space $X$ \textit{stabilizes at the point $x$ at the $n$-step} if
$$G^{n}(x)=X \quad \text{but} \quad  G^{n-1}(x)\not=X,$$ 
where $G^n(x)$ is defined as in \eqref{eq-G^n}. 
\end{definition}%

In terms of Definition 4 in \cite{Gevorkyan2021}, Definition \ref{steps} divides finitely generated orbits in stabilization types. An important task in the study of homogeneous binary $G$-spaces would be to construct examples of each stabilization type.

In terms of Definition \ref{steps} the following is an easy result.

\begin{proposition}\label{distributive_step1}
Homogeneous distributive binary $G$-spaces have the stabilization property at step $1$ at any point.
\end{proposition}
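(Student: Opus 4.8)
The plan is to combine two facts already established in the excerpt: for a distributive binary $G$-space one has $[x]=G(x,x)$ for every point $x$, and a homogeneous distributive binary $G$-space is transitive by Proposition \ref{prop_1}, so $G(x,x)=X$ for every $x\in X$. Since $G^1(x)=G(x,x)$ by the definition in \eqref{eq-G^n}, these two facts together give $G^1(x)=X$ for every $x$, which is precisely the statement that the space stabilizes at step $1$ at every point, provided one also checks the side condition $G^0(x)\neq X$.

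First I would invoke Proposition \ref{prop_1} to pass from ``homogeneous distributive'' to ``transitive'', so that $G(x,x)=X$ for all $x\in X$. Then I would note that by the definition of the sequence $G^n(x)$ in \eqref{eq-G^n}, we have literally $G^1(x)=G(x,x)$, hence $G^1(x)=X$. The only remaining point is the requirement in Definition \ref{steps} that $G^{n-1}(x)\neq X$, i.e. here that $G^0(x)\neq X$. This needs a tiny bit of care: one should read $G^0(x)$ as the singleton $\{x\}$ (the zeroth term of the tower, before any application of $G$), and then $G^0(x)=\{x\}\neq X$ holds as long as $X$ has more than one point. If $X$ is a single point the statement is vacuous or trivially true at step $0$; I would either add the harmless hypothesis that $X$ is not a singleton or simply remark that the degenerate case is uninteresting.

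The main (and really the only) obstacle is bookkeeping about what ``step $1$'' and $G^0(x)$ mean, and making sure the convention is consistent with Definition \ref{steps} and with Definition 4 of \cite{Gevorkyan2021}; the mathematical content is immediate from Proposition \ref{prop_1} together with the identity $[x]=G(x,x)$ recalled in the Preliminaries. So the write-up is short: state that by Proposition \ref{prop_1} the space is transitive, hence $G^1(x)=G(x,x)=X$ for every $x$, while $G^0(x)=\{x\}\subsetneq X$, so by Definition \ref{steps} the stabilization property holds at step $1$ at every point.
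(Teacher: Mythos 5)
Your argument is the same as the paper's: invoke Proposition \ref{prop_1} to get transitivity and then read off $G^1(x)=G(x,x)=X$ for every $x$ from \eqref{eq-G^n}. The extra care you take about the side condition $G^{0}(x)\neq X$ (which the paper silently ignores, since the tower in \eqref{eq-G^n} starts at $n=1$) is a harmless and reasonable clarification, not a different proof.
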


\begin{proof}
As a homogeneous distributive binary $G$-space $X$ is transitive by Proposition \ref{prop_1}, one has $G^1(x)=G(x,x)= X$ for any point $x\in X$. 
\end{proof}

As it was seen before, the homogeneous binary $G$-spaces might contain points whose orbit is not the whole space (see Example \ref{ex-ZZZ}). It is also true that a homogeneous binary $G$-space can have different stabilization properties at different points. 

\begin{example}[Stabilization at different points at steps 1 and 2]
Consider the additive five element cyclic group $\mathbb{Z}_5$. The group of its invertible elements 
$G=\{1,2,3,4\}$ acts on this group by multiplication. 

Now consider the space $X=G$ with the binary $G$-action defined by the rule
\begin{equation}
    g(x,x')=g^xx'.
\end{equation}

It is easy to check that $G^1(1)=G(1,1)=X$. Therefore, the binary $G$-space $X$ stabilizes at step $1$ at the point $x=1$.

On the other hand, by direct computation we get $G^1(2)=G(2,2)=\{2,3\}$ and $G^2(2)=G(G^1(2), G^1(2))=X$. So, the binary $G$-space $X$ stabilizes at the point $x=2$ at step $2$. 
\end{example}

\begin{example}[Stabilization at step 3]
Consider the six-element symmetric group 
$S_3$ with the presentation
\begin{equation}
    S_3=\langle h,x : h^2=x^2=(xh)^3=e\rangle. 
\end{equation}
    
 It can be seen that
\begin{equation}
    S_3=\{e,x,h,xh,hx,xhx\}
\end{equation} and $xhx=hxh$.

Now consider the subgroup $G=\{e,h\}\subset S_3$ and define the binary action $\mu: G\times X^2 \to X$ of $G$ on $X=S_3$ by the rule
\begin{equation}\label{action_example}
    g(x,x')=x^{-1}gxx'.
\end{equation}

By direct computation, one has
\[
G^1(x)=G(x,x)=\{x,xh\},
\]
\[
G^2(x)=\{e,h,x,xh\},
\] 
\[
G^3(x)=X.
\]
Thus, the binary $G$-space $X$ stabilizes at the point $x$ at step 3.
\end{example}

Infinite binary $G$-spaces can stabilize at step $\infty $. 

\begin{example}[Stabilization at step $\infty$]
Let $G$ be an infinite group. Suppose that there exist $h,x \in G$ satisfying the following conditions:

(1) The elements $h$ and $x$ are of order 2: $h^2 = x^2 = e$, where $e$ is the identity element of $G$; 

(2) The element $xh\in G$ is of infinite order.

The subgroup $H = \{e, h\}$ of $G$ acts binarily on $G$ by the rule \eqref{action_example}. As shown in \cite[Theorem 3]{Gevorkyan2021}, the orbit of an element $x\in G$ is infinitely generated. Therefore, the orbit $[x]$, considered as a homogeneous binary $H$-space, stabilizes at the point $x$ at step $\infty$.
\end{example}

In the following we construct examples of stabilization at finite steps for infinite non-discrete binary $G$-spaces.

\begin{example}
Let $G=\mathbb{R}$ be the additive group of real numbers and let $X=\mathbb{R}^2$ be the plane.   Define a continuous map $\mu:G \times X^2 \to X$ by the formula 
\begin{equation}\label{example_step_2}
 g(\mathbf{x},\mathbf{x}')=(g\cdot \cos x_1+x_1',g\cdot \sin x_1+x_2' ),
\end{equation}
where $g\in G$, $\mathbf{x}=(x_1,x_2), \  \mathbf{x}'=(x'_1,x'_2) \in X$, and we have denoted $\mu(g, \mathbf{x},\mathbf{x}') = g(\mathbf{x},\mathbf{x}')$.

It can be verified that $\mu $ is a binary action of $G$ on $X$.

We will prove that this binary $G$-space is homogeneous and has the stabilization property at step $2$ at the point $\mathbf{x}_0=(0,0)$, i.e., $G^2(x_0)=X$.

First we compute $G^1(\mathbf{x}_0)=G(\mathbf{x}_0,\mathbf{x}_0)$. Consider an element $g\in G$. Then,
$$
g(\mathbf{x}_0,\mathbf{x}_0)=
(g\cdot \cos 0+0,g\cdot \sin 0+0 )=(g,0).
$$ 
So $G(\mathbf{x}_0,\mathbf{x}_0)=\mathbb{R}\times\{0\}\neq X $.

Now consider any point $\mathbf{x}=(x_1,x_2)\in X$ and take $g=\sqrt{x_1^2+x_2^2}$ and 
$x_1'=\tan^{-1}\left(\dfrac{x_2}{x_1} \right)$ if $x_1\neq 0$, and $x_1'=\pi/2$ if $x_1= 0$. Then one can check that
$$
g((x_1',0),(0,0))=\mathbf{x}
$$ 
and, therefore, $G^2(x_0)=X$.
\end{example}

This example can be generalized as follows.

\begin{example}
 Let $G=\mathbb{R}$ be the additive group of real numbers and let $X=\mathbb{R}^n$ be $n$-dimensional Euclidean space. Consider the continuous map 
$$\mu:G \times X^2 \to X, \quad \mu(g,\mathbf{x}, \mathbf{y})=g(\mathbf{x}, \mathbf{y}) = \mathbf{z} =(z_1, \dots , z_n)
$$
given by the formulas 
\begin{equation}\label{example_step_n}
\begin{array}{ll}
z_1= g\cdot \cos x_1 +y_1, &\\
z_2 = g\cdot \sin x_1 \cdot \cos x_2 +y_2,&\\
z_3 = g\cdot \sin x_1\cdot \sin x_2\cdot \cos x_3+y_3,&\\
\dots &\\
z_{n-1} = g\cdot \sin x_1 \cdot \sin x_2 \cdot \ldots\cdot \sin x_{n-2} \cdot \cos x_{n-1} +y_{n-1},&\\
z_n = g\cdot \sin x_1 \cdot \sin x_2\cdot \ldots\cdot \sin x_{n-2}\cdot \sin x_{n-1} +y_{n}.
\end{array}
\end{equation}
for any $g\in G$, $\mathbf{x}=(x_1,\dots, x_n), \mathbf{y}=(y_1,\dots, y_n)\in \mathbb{R}^n$.

The proof that the map $\mu$ is a binary action of the group $G$ on $X$ we leave to the reader. 

Let us prove that this binary $G$-space is homogeneous and has the stabilization property at the $n$-th step at the point $\mathbf{x}_0= (0,\dots,0)$, i.e., 
\begin{equation}\label{eq-GnRn}
G^n(\mathbf{x}_0)=\mathbb{R}^n.
\end{equation}
This will be proved by induction, using hyperspherical coordinates in an $n$-dimensional Euclidean space $\mathbb{R}^n$.

First, we compute $G^1(\mathbf{x}_0)=G(\mathbf{x}_0,\mathbf{x}_0)$. For any element $g\in G$, by \eqref{example_step_n}, we have
\begin{multline*}
g(\mathbf{x}_0,\mathbf{x}_0)=(g\cdot \cos 0+0,g\cdot \sin 0\cdot \cos 0+0,g\cdot \sin 0\cdot \sin 0\cdot \cos 0+0, \dots\\
\dots,g\cdot \sin 0\cdot \ldots \cdot \sin 0\cdot \cos 0+0,
g\cdot \sin 0\cdot \ldots \cdot \sin 0+0 )=(g,0,\dots, 0).
\end{multline*}
So, $G^1(\mathbf{x}_0)=\mathbb{R}\subset \mathbb{R}^n$.

Now assume that 
$G^{k-1}(\mathbf{x}_0)=\mathbb{R}^{k-1}$ and then prove that $G^k(\mathbf{x}_0)= \mathbb{R}^k$. For this, it suffices to prove that
$$
G(G^{k-1}(\mathbf{x}_0),\mathbf{x}_0)
=\mathbb{R}^k.
$$ 
Consider any point $\mathbf{x}=(x_1,x_2, \dots , x_{k-1}, 0, \cdots , 0)\in \mathbb{R}^{k-1}$, and denote $g(\mathbf{x}, \mathbf{x}_0) = \mathbf{z} =(z_1, \dots , z_n)$. By \eqref{example_step_n}, we obtain

$
\begin{array}{ll}
z_1= g\cdot \cos x_1, &\\
z_2 = g\cdot \sin x_1 \cdot \cos x_2,&\\
z_3 = g\cdot \sin x_1\cdot \sin x_2\cdot \cos x_3,&\\
\dots &\\
z_{k-1} = g\cdot \sin x_1 \cdot \sin x_2 \cdot \ldots\cdot \sin x_{k-2} \cdot \cos x_{k-1},&\\
z_{k} = g\cdot \sin x_1 \cdot \sin x_2 \cdot \ldots\cdot \sin x_{k-1},&\\
z_{k+1} = 0,&\\
\dots &\\
z_n = 0,
\end{array}
$

\noindent
which is the expression of an arbitrary point 
$(z_1, \dots , z_k)\in \mathbb{R}^{k}\subset \mathbb{R}^{n}$ in hyperspherical coordinates $(g, x_1, \dots, x_{k-1})$. This means that
$G(G^{k-1}(\mathbf{x}_0),\mathbf{x}_0)
=\mathbb{R}^{k}$.
\end{example}

In view of the previous examples, one has the following. 

\begin{theorem}
   For any natural $n\in \mathbb{N}$
   there exist a (non-discrete, Abelian) topological group $G$ and a (non-discrete) homogeneous binary $G$-space $X$ with the stabilization property at step $n$. 
\end{theorem}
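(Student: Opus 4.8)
The plan is to simply invoke the preceding two examples and check that they deliver exactly what the theorem demands, so the "proof" is essentially a bookkeeping argument. For $n=1$ any homogeneous distributive binary $G$-space works by Proposition \ref{distributive_step1} — for instance the coset-space construction of Proposition \ref{prop_0} with $G$ a nontrivial connected compact abelian group (say $G=S^1$) and $H$ trivial — giving a non-discrete abelian $G$ and a non-discrete $X$ stabilizing at step $1$. For $n\geq 2$, I would take $G=\mathbb{R}$ (the additive group of reals, which is non-discrete and abelian) and $X=\mathbb{R}^n$ with the binary action $\mu$ defined by the hyperspherical formulas \eqref{example_step_n}. The previous example establishes that $(G,X,\mu)$ is a binary $G$-space, that it is homogeneous with stabilization point $\mathbf{x}_0=(0,\dots,0)$, and that $G^n(\mathbf{x}_0)=\mathbb{R}^n$ while $G^{k}(\mathbf{x}_0)=\mathbb{R}^k\subsetneq\mathbb{R}^n$ for $k<n$; the latter is precisely the condition "$G^{n-1}(\mathbf{x}_0)\neq X$" of Definition \ref{steps}. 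Hence $X$ has the stabilization property at step $n$ at the point $\mathbf{x}_0$.

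Concretely, the write-up would read: for $n=1$ appeal to Proposition \ref{distributive_step1} applied to a non-discrete example such as $(S^1, S^1, \mu)$ from Proposition \ref{prop_0}; for $n\geq 2$ the $n$-dimensional example above exhibits the required $G=\mathbb{R}$ and $X=\mathbb{R}^n$. Both $G$ and $X$ are non-discrete and $G$ is abelian in every case, so all the parenthetical requirements are met. This proves the theorem.

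The only point needing a word of care — and the closest thing to an obstacle — is verifying the strict inequality $G^{k-1}(\mathbf{x}_0)\subsetneq X$ for $k\le n$, i.e. that the tower \eqref{eq-G^n} genuinely grows one dimension at a time and does not jump to all of $\mathbb{R}^n$ prematurely. This follows from the inductive computation in the previous example: the coordinates $z_{k+1},\dots,z_n$ of $g(\mathbf{x},\mathbf{x}_0)$ vanish whenever $\mathbf{x}\in\mathbb{R}^{k-1}=G^{k-1}(\mathbf{x}_0)$, so $G(G^{k-1}(\mathbf{x}_0),\mathbf{x}_0)\subseteq\mathbb{R}^k$, and the hyperspherical-coordinate surjectivity gives the reverse inclusion; since one still has the freedom to translate by the second argument, $G^k(\mathbf{x}_0)=G(G^{k-1}(\mathbf{x}_0),G^{k-1}(\mathbf{x}_0))$ is no larger than $\mathbb{R}^k$ either. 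Thus the tower is strictly increasing through step $n$, which is exactly what Definition \ref{steps} requires, and the theorem follows from the two examples.
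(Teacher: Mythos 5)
Your proposal is correct and follows essentially the same route as the paper, whose proof is simply to invoke the preceding hyperspherical examples ($G=\mathbb{R}$, $X=\mathbb{R}^n$ with the action \eqref{example_step_n}), with the $n=1$ case covered by any transitive example. Your added verification that $G^k(\mathbf{x}_0)=G(G^{k-1}(\mathbf{x}_0),G^{k-1}(\mathbf{x}_0))$ does not exceed $\mathbb{R}^k$ (because $\sin x_k=0$ kills the coordinates $z_{k+1},\dots,z_n$ when both arguments lie in $\mathbb{R}^{k-1}$) is a welcome extra detail that the paper's example leaves implicit when it reduces the induction step to computing $G(G^{k-1}(\mathbf{x}_0),\mathbf{x}_0)$ only.
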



\end{document}